\newcommand{\bbF}{\mathbb{F}}
\newcommand{\bbQ}{\mathbb{Q}}
\newcommand{\bbN}{\mathbb{N}}
\newcommand{\bbP}{\mathbb{P}}
\newcommand{\scF}{\mathscr{F}}
\newcommand{\clN}{\mathcal{N}}
\newcommand{\sep}[1]{\bar{#1}^\mathrm{sep}}
\newcommand{\Din}{\rotatebox{270}{$\in$}}
\newcommand{\Dnotin}{\rotatebox{270}{$\notin$}}
\newcommand{\generated}[1]{\langle #1 \rangle}
\newcommand{\pdiff}[1]{\frac{\partial}{\partial#1}}
\theoremstyle{plain}
 \newtheorem{prop}[subsection]{Proposition}
\begin{document}

\title[Field Generated by Newton Polynomials]{Generation of the Symmetric Field by Newton
Polynomials in prime Characteristic}
\author[M. Monge]{Maurizio Monge}
\address{Scuola Normale Superiore di Pisa - Piazza dei Cavalieri, 7 - 56126 Pisa}
\email{maurizio.monge@sns.it}
\date{\today}

\keywords{Netwon polynomial, symmetric function, Schur function, irreducibility, function field,
  prime characteristic}
\subjclass[2010]{05E05, 12F20, 12E10, 12E20}

\begin{abstract}
Let $N_m = x^m + y^m$ be the $m$-th Newton polynomial in two variables, for $m \geq 1$.
Dvornicich and Zannier proved that in characteristic zero three Newton polynomials $N_a, N_b,
N_c$ are always sufficient to generate the symmetric field in $x$ and $y$, provided that $a,b,c$ are
distinct positive integers such that $(a,b,c)=1$. In the present paper we prove that in case of
prime characteristic $p$ the result still holds, if we assume additionally that $a,b,c,a-b,a-c,b-c$
are prime with $p$. We also provide a counterexample in the case where one of the hypotheses
is missing.

The result follows from the study of the factorization of a generalized Vandermonde determinant in
three variables, that under general hypotheses factors as the product of a trivial
Vandermonde factor and an irreducible factor. On the other side, the counterexample is connected to
certain cases where the Schur polynomials factor as a product of linear factors.
\end{abstract}

\maketitle

\section{Introduction}

Let $F = k(x,y)$ be the function field generated over a field $k$ by the algebraically independent
transcendentals $x, y$, and let $S$ be the subfield of symmetric functions. Let $N_m$ be the $m$-th
Newton polynomial (or power sum) in $x$ and $y$
\begin{equation*}
 N_m \ =\  x^m + y^m,\qquad\text{for }m\geq 1.
\end{equation*}
Note that if the characteristic is $p$, then $N_{p^km} = N_m^{p^k}$, for each $k\in\bbN$.

We will also call $\clN_{a,b}$ (resp. $\clN_{a,b,c}$) the subfield of $F$ generated by $N_a,N_b$
(resp. $N_a,N_b,N_c$) over $k$, i.e.
\begin{equation*}
 \clN_{a,b} \ =\  k(N_a,N_b),\qquad \clN_{a,b,c} \ =\  k(N_a,N_b,N_c).
\end{equation*}

In \cite{MeadStein1998} Mead and Stein calculated the degree of the extension $S/\clN_{a,b}$ in
characteristic zero, and conjectured that $S = \clN_{a,b,c}$ (i.e. $N_a,N_b,N_c$ generate the whole
symmetric field) whenever $a,b,c$ are distinct integers such that $(a,b,c)=1$, also providing
evidence for their conjecture. This conjecture was finally settled in \cite{DvornicichZannier2003}
by Dvornicich and Zannier, by computing the Galois group of a polynomial connected to a
fundamental determinantal equation via the Riemann Existence Theorem. The solution of the
conjecture then followed after they proved that such Galois group must be the full
symmetric group, and considering the action of the Galois group on a system of equations
connected to the problem.

These topological methods do not seem to admit an immediate generalization to the prime
characteristic case. However we will show that the same result also follows from the
irreducibility of the main factor of the fundamental determinantal equation, and that in many cases
such irreducibility can be proved by elementary methods.

It should also be noted that it is not possibile to expect the conjecture to hold
in prime characteristic without any additional hypothesis, since whenever $a,b$ are distinct
positive integers such that $(a,b)=1$, then $a,pa,b$ are coprime integers and $N_{pa}=N_a^p$, so
$\clN_{a,pa,b} = \clN_{a,b}$ (that can be easily seen to be $\neq S$, in general).
It is not enough just to request that $a, b, c$ are all prime with $p$: in the last
section we show that there exist triples of coprime integers $a,b,c$, all prime with $p$, such
that $\clN_{a,b,c}$ is strictly contained in $S$. The degree of such non-trivial extension
can be computed explicitly, and we will also exhibit a formula for this degree for a family of
triples $a,b,c$ such that the differences are not all prime with $p$.

\subsection{Acknowledgements} We wish to thank Roberto Dvornicich for reading the first draft of
this paper and for offering kind support and advice during our work.

\section{Preliminary results}

We will now prove that the solution of our problem only depends on the characteristic of the field
of constants $k$. This allows us to replace $k$ with any other field with the same characteristic,
provided that $x,y$ are still algebraically independent.

For any field $L$, let $\clN_a^L$ be the field generated over $L$ by a collection of Newton
polynomials $N_{a_1}, \dots, N_{a_s}$ (actually we will always be in the case of $s=2,3$).
Similarly, let $F^L$ (resp. $S^L$) be the field of functions (resp. symmetric functions) in $x,y$
over $L$. Then we have:

\begin{prop}
Let $K$ be a field such that $x,y$ are algebraically independent over $K$, and let $k$
be a subfield. Then
\begin{equation*}
 [S^K : N_a^K] \ =\ [S^k : N_a^k],
\end{equation*}
intending that whenever one of the two degrees is finite, then the other one
is finite too and the two degrees are equal.
\end{prop}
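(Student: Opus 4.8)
The plan is to reduce the general field $K$ to a purely transcendental extension $k(x,y)$ of $k$, since the statement for $K = k(x,y)$ would then be the content of the result, and any larger $K$ should behave the same way by a base-change argument. First I would observe that the tower $S^k \subseteq S^K$ and the tower $\clN_a^k \subseteq \clN_a^K$ are both obtained from their ``$k$-versions'' by extending the constant field, and that $S^K = S^k \cdot K$ and $\clN_a^K = \clN_a^k \cdot K$ as subfields of $F^K = K(x,y)$ — here one uses that $x,y$ are algebraically independent over $K$, hence a fortiori over $k$, so all the polynomial identities defining these fields are the same. The key point is then a linear-disjointness statement: the field $K$ and the field $F^{k_0}$ (where $k_0$ is the algebraic closure of $k$ inside $K$, or just $k$ itself after noting algebraic constants change nothing essential) are linearly disjoint over $k$ inside $F^K$, because $x,y$ remain algebraically independent over $K$.

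The main steps, in order, are: (1) reduce to the case where $k$ is relatively algebraically closed in $K$, by checking separately that replacing $k$ with its relative algebraic closure in $K$ changes neither degree — for $[S^k:\clN_a^k]$ this is a standard ``constant field extension'' fact for function fields in two variables, and one may alternatively invoke that algebraic extensions of the constants are built from finite ones and track degrees through a compositum; (2) with $k$ relatively algebraically closed in $K$, argue that $K$ is linearly disjoint from $F^k$ over $k$ — this is where algebraic independence of $x,y$ over $K$ is used, via the characterization that $t_1,\dots,t_n$ algebraically independent over $K$ iff $K$ is linearly disjoint from $k(t_1,\dots,t_n)$ over $k$ together with the hypothesis on relative algebraic closure; (3) deduce that $[S^K:\clN_a^K] = [S^k \cdot K : \clN_a^k \cdot K] = [S^k:\clN_a^k]$, the last equality because tensoring the (possibly infinite, but in the finite case finite) extension $S^k/\clN_a^k$ up along the linearly disjoint $K/k$ preserves the degree; (4) handle the ``one finite iff the other finite'' clause by noting that linear disjointness gives $[S^K \cdot K : \clN_a^k \cdot K]$ finite exactly when $[S^k:\clN_a^k]$ is, since a $\clN_a^k$-basis of $S^k$ maps to a $\clN_a^K$-basis of $S^K$.

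The main obstacle I expect is step (1)–(2): being careful about what ``$\clN_a^K = \clN_a^k \cdot K$'' really means and ensuring the linear disjointness is applied to the right pair of fields. Concretely, $\clN_a^k$ is generated over $k$ by the $N_{a_i}$, which are fixed polynomials in $x,y$ independent of the ground field, so $\clN_a^k \cdot K = K(N_{a_1},\dots,N_{a_s}) = \clN_a^K$ is immediate; the same for $S$ using that $S = k(x+y, xy)$ with the two elementary symmetric polynomials again ground-field-independent. The genuinely delicate part is that linear disjointness of $K$ and $F^k$ over $k$ requires $k$ to be relatively algebraically closed in $K$ when $k$ is not perfect (a separability subtlety in characteristic $p$, precisely the setting of this paper); I would address this by first extending $k$ to its relative algebraic closure $\tilde k$ in $K$, proving $[S^{\tilde k}:\clN_a^{\tilde k}] = [S^k:\clN_a^k]$ via the elementary-symmetric description and a degree computation for the algebraic (hence in the relevant finite case, finite) extension $\tilde k / k$ — or simply citing that finite-by-finite towers of function fields of the same transcendence degree are well understood — and then running steps (2)–(4) with $\tilde k$ in place of $k$. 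I would expect the write-up to be short once the linear disjointness input is correctly set up.
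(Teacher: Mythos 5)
Your steps (2)--(4) are exactly the paper's argument: the compositum identifications $\clN_a^K = K\cdot\clN_a^k$, $S^K = K\cdot S^k$, linear disjointness of $K$ and $F^k=k(x,y)$ over $k$, descent of that disjointness to the pair $\clN_a^K$, $S^k$ over $\clN_a^k$, and the transport of a basis in both directions to get finiteness and equality of the degrees. The problem is step (1). It rests on a false premise: linear disjointness of $K$ and $k(x,y)$ over $k$ does \emph{not} require $k$ to be relatively algebraically closed in $K$, nor any separability hypothesis. Since $x,y$ are algebraically independent over $K$, the monomials $x^iy^j$ are linearly independent over $K$; as they span $k[x,y]$ over $k$, the ring $k[x,y]$ is linearly disjoint from $K$ over $k$, and clearing denominators extends this to $k(x,y)$. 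This is precisely the result the paper cites (Lang, Ch.~VIII, Prop.~3.3): a purely transcendental extension is linearly disjoint from any extension over which the generators stay algebraically independent. Hypotheses of the kind you worry about (relative algebraic closure, separability) enter when one wants ``free implies linearly disjoint'' for general regular extensions, not here.

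As written, step (1) is moreover a genuine gap, not just an unnecessary detour: the claim $[S^{\tilde k}:\clN_a^{\tilde k}]=[S^k:\clN_a^k]$ for the relative algebraic closure $\tilde k$ of $k$ in $K$ is itself an instance of the proposition being proved (constant field extension with algebraic, possibly inseparable, $\tilde k/k$ in characteristic $p$), and you justify it only by appeal to an unspecified ``standard constant field extension fact''; no actual argument is given, and the natural way to prove it is again the linear disjointness statement you were trying to avoid. The fix is simply to delete step (1): run your steps (2)--(4) with $k$ itself, using the monomial argument above for the disjointness of $K$ and $F^k$ over $k$, and the proof closes and coincides with the paper's.
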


\begin{proof}
In fact, we can consider the following diagram:
\begin{displaymath}
\xymatrix@!0@=25pt{
&    &    &   F^K\ar@{-}[ld]\ar@{-}[rrd]   \\
&    &  S^K\ar@{-}[ld]\ar@{-}[rrd]  &       &     & F^k\ar@{-}[ld]  \\
&  \clN^K_a \ar@{-}[ld]\ar@{-}[rrd]  &    &       &     S^k\ar@{-}[ld]                          \\
K\ar@{-}[rrd] &    &    & \clN^k_a \ar@{-}[ld] \\
&    & k }
\end{displaymath}

Being $x,y$ algebraically independent over $K$, it follows that $K$ and $F^k$ are linearly disjoint
over $k$ (see \cite[Prop. 3.3, chap. VIII]{Lang2002}). Consequently we have that $\clN^K_a$
and $S^k$ are linearly disjoint over $\clN^k_a$ as well (see \cite[Prop. 3.1, chap.
VIII]{Lang2002}), implying the equality whenever $S^k/\clN^k_a$ or $S^K/\clN^K_a$ is an algebraic
extension of finite degree.
\end{proof}

Note that this proposition allows us to replace the field $K$ with any other field $L$ with the
same characteristic, as they both contain the same prime field ($\bbQ$ or $\bbF_p$).

Another simple but crucial observation is that we just need to calculate the degree
$[S:\clN_a]$ when $\clN_a$ is the field generated by a collection of Newton polynomials
$N_{a_1},\dots,N_{a_s}$ satisfying $(a_1,\dots,a_s) = 1$. In fact, if the $a_1,\dots,a_s$
have a non-trivial gcd, $g$ say, we obtain a field that is contained in the symmetric field
in $x^g,y^g$. If we call $\clN_{a/g}$ the field generated by $N_{a_1/g},\dots,N_{a_s/g}$, and call
$S^{(g)}$ the symmetric field in $x^g,y^g$, we have that
\begin{equation*}
 [S : \clN_a] = [S : S^{(g)}] [S^{(g)} : \clN_a] = g^2 [S : \clN_{a/g}].
\end{equation*}
Consequently, we can always assume $(a_1,\dots,a_s) = 1$ with no loss of generality.

\subsection{Field generated by two polynomials} We now extend to the prime characteristic case the
results obtained by Mead and Stein in \cite{MeadStein1998}. We have the following

\begin{prop}
Let $a > b$ be coprime integers. If $k$ has positive characteristic $p$, let's assume $a,b$
prime with $p$. Then the extension $S/\clN_{a,b}$ is a separable algebraic extension of degree
\begin{equation*}
 [S : \clN_{a,b}] \ =\
    \left\{\begin{array}{cl}
        \frac{ab}{2} & \text{if }ab\text{ is even,}\\
        \frac{(a-1)b}{2} & \text{if }ab\text{ is odd.}
    \end{array}\right.
\end{equation*}
\end{prop}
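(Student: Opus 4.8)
The plan is to compute the degree by a direct birational/Galois-theoretic analysis of the map $(x,y) \mapsto (N_a, N_b)$, following the spirit of Mead–Stein but being careful about separability in characteristic $p$. First I would pass to the symmetric field $S = k(e_1, e_2)$ where $e_1 = x+y$, $e_2 = xy$, and express $N_a, N_b$ as polynomials in $e_1, e_2$; then $\mathcal{N}_{a,b} = k(N_a, N_b) \subseteq S$, and $[S : \mathcal{N}_{a,b}]$ is the degree of the generically finite rational map $\mathbb{A}^2 \to \mathbb{A}^2$, $(e_1,e_2) \mapsto (N_a(e_1,e_2), N_b(e_1,e_2))$. Equivalently, working in $F = k(x,y)$, the degree $[F : \mathcal{N}_{a,b}]$ counts pairs $(x',y')$ (generically) with $x'^a + y'^a = N_a$ and $x'^b + y'^b = N_b$, and this should equal $2[S : \mathcal{N}_{a,b}]$ since generically the fiber breaks into symmetric pairs — I would need to check that $F/S$ and $F/\mathcal{N}_{a,b}$ interact cleanly, i.e. that $S \cap (\text{something})$ behaves, but the cleanest route is to count the solutions of the system directly and divide by $2$.

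The key computation is a resultant/Bézout count. Generically, fix $u = N_a$, $v = N_b$; set $s = x^b$, and eliminate. Writing $\zeta = x/y$ (or better, homogenizing), one reduces to counting roots of a one-variable equation. Concretely, from $x^a + y^a = u$ and $x^b + y^b = v$ one forms a polynomial whose roots are the admissible ratios $x/y$; after clearing the common factor $\gcd(a,b) = 1$ this has degree essentially $a + b$ in the homogeneous setup, but one must subtract contributions from the "trivial" solutions where $x = y$ or where coordinates vanish, and account for the $\mathbb{Z}/2$ swap. Carefully bookkeeping these boundary/degenerate solutions is exactly what produces the case split: when $ab$ is even one gets $\tfrac{ab}{2}$, and when $ab$ is odd the solution $x = -y$ (giving $N_a = N_b = 0$, valid only for odd exponents) or a similar parity phenomenon removes a factor, yielding $\tfrac{(a-1)b}{2}$. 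I expect the main obstacle to be this degenerate-solution bookkeeping — proving that the naive Bézout number is reduced by exactly the right amount, with no further drop, which amounts to showing a certain discriminant/resultant is nonzero; here the hypotheses $p \nmid a$ and $p \nmid b$ enter decisively, as they guarantee $x^a + y^a$ and $x^b + y^b$ are separable (no repeated roots as polynomials in one variable after dehomogenizing) and that the relevant Jacobian $\partial(N_a,N_b)/\partial(x,y)$ does not vanish identically modulo $p$.

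For separability of the extension $S/\mathcal{N}_{a,b}$, I would argue that the map is generically étale: compute the Jacobian determinant $J = \det \begin{pmatrix} a x^{a-1} & a y^{a-1} \\ b x^{b-1} & b y^{b-1}\end{pmatrix} = ab\,(x^{a-1}y^{b-1} - x^{b-1}y^{a-1}) = ab\,x^{b-1}y^{b-1}(x^{a-b} - y^{a-b})$, which is a nonzero element of $F$ precisely when $p \nmid ab$ (using $p \nmid a$, $p \nmid b$; note $a \neq b$); nonvanishing of the Jacobian at the generic point gives separability. Then the degree formula follows from the solution count above, and one double-checks small cases (e.g. $(a,b) = (2,1)$ giving degree $1$, $(a,b) = (3,1)$ giving degree $1$, $(a,b) = (3,2)$ giving degree $3$) against the stated formula to confirm the parity split is oriented correctly. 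If the direct elimination proves unwieldy, an alternative is to adapt the Mead–Stein characteristic-zero argument verbatim, checking line by line that every step (irreducibility of an auxiliary polynomial, a Newton-polygon or Eisenstein-type argument) survives under the added hypotheses $p \nmid a$, $p \nmid b$ — this is likely the shortest path, since the characteristic-zero proof is already known and the hypotheses are tailored to make it go through.
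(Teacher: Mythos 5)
Your separability argument is exactly the paper's (the same Jacobian computation), so that part is fine. The problem is the degree formula: your plan defers precisely the step that constitutes the proof. You reduce to counting points of a generic fiber of $(x,y)\mapsto(N_a,N_b)$ and divide by $2$, and you yourself flag the ``degenerate-solution bookkeeping'' as the main obstacle --- but that bookkeeping is the whole content, and it is never carried out. Concretely, eliminating via $\zeta=x/y$ leads to $u^b(\zeta^b+1)^a=v^a(\zeta^a+1)^b$, which has degree $ab$ in $\zeta$ (not ``essentially $a+b$'' as you write); when $a,b$ are both odd the spurious root $\zeta=-1$ must be shown to occur with multiplicity exactly $\min(a,b)=b$ (this is where $p\nmid ab$ keeps $\zeta^a+1$, $\zeta^b+1$ separable), and the remaining roots must be shown to be generically simple and in bijection with actual fiber points. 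Even then, equating the number of points of a generic fiber with $[F:\clN_{a,b}]$ requires knowing that the generic fiber is a single Galois orbit, i.e.\ an irreducibility statement; without it the count is only an upper bound, and ``no further drop'' is unproved. Your fallback (``adapt Mead--Stein line by line'') is a pointer, not an argument.

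The paper sidesteps all of this with a short algebraic proof you should compare against: since $\gcd(a,b)=1$, adjoining $x$ to $\clN_{a,b}$ gives $y^a=N_a-x^a$ and $y^b=N_b-x^b$, hence $y$, so $x$ is a primitive element of $F/\clN_{a,b}$; $x$ is a root of $f(X)=(N_a-X^a)^b-(N_b-X^b)^a$, which is homogeneous of weight $ab$ when $X,N_a,N_b$ are given weights $1,a,b$; its constant term $N_a^b-N_b^a$ is irreducible because any factor would have weight divisible by both $a$ and $b$, hence by $ab$, and the irreducibility of $f$ follows from homogeneity. The degree of $f$ in $X$ is $ab$ unless $a,b$ are both odd, in which case the top terms cancel and the degree is $(a-1)b$; dividing by $[F:S]=2$ gives the stated formula. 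That irreducibility argument is exactly the missing ``no further drop'' ingredient in your sketch, so as it stands your proposal has a genuine gap rather than being an alternative complete proof.
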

\begin{proof}
We can immediately see that the the extension is a finite separable algebraic extension, because
the Jacobian (see \cite[Sec. 5, chap. VIII]{Lang2002}) of the algebraic map $(x,y) \mapsto
(x^a+y^a,x^b+y^b)$ is
\begin{equation*}
  \det\begin{pmatrix} ax^{a-1} & ay^{a-1} \\ bx^{b-1} & by^{b-1} \end{pmatrix}
  \ =\ ab(x^{a-1}y^{b-1}-x^{b-1}y^{a-1}),
\end{equation*}
that is $\neq 0$ since we assumed $a,b$ to be prime with the characteristic, or the characteristic
to be zero. Note for future reference that we did not need to assume $a,b$ to be coprime to achieve
this.

To calculate the degree of the extension we will calculate the degree of $F/\clN_{a,b}$, the degree
of $S/\clN_{a,b}$ will be precisely half of it. Observe that when we add $x$ to the field
$\clN_{a,b}$ we get
\begin{equation*}
 y^a \ =\ N_a - x^a,\qquad y^b \ =\  N_b - x^b,
\end{equation*}
and consequently $y \in \clN_{a,b}(x)$ being $a,b$ relatively prime. This proves that $x$ is a
primitive element for $F$, i.e. $F = \clN_{a,b}(x)$, so we have to calculate the degree of $x$ over
$\clN_{a,b}$.

But $x$ is a solution of the polynomial
\begin{equation*}
 f(X) \ =\ (N_a - X^a)^b - (N_b - X^b)^a \in \clN_{a,b}[X],
\end{equation*}
that is homogeneous of weight $ab$, if we assign weight $1$ to $X$ and $a,b$ to $N_a,N_b$
respectively. Furthermore, $N_a,N_b$ must be algebraically independent over $k$, since $F$ has
transcendence degree $2$ over $k$, and $F/\clN_{a,b}$ is algebraic.

The constant term of $f(X)$ is $N_a^b - N_b^a$, and it is irreducible. In fact it is homogeneous of
weight $ab$, and a factor should have weight multiple of both $a$ and $b$, and thus $ab$ being $a,b$
relatively prime. Consequently $f(X)$ is irreducible too being homogeneous, and its degree in $X$ is
$ab$ when one of $a,b$ is even (and in this case $p\neq 2$, since $(ab,p)=1$), or $(a-1)b$
otherwise.
\end{proof}

\section{Case with some of \texorpdfstring{$a,b,c$}{a,b,c} divisible by \texorpdfstring{$p$}{p}}

In this section we will work in characteristic $p$, assuming the base field to be $\bar{\bbF} =
\bar{\bbF}_p$ for convenience. We will see that we are not actually losing much assuming all of
$a,b,c$ to be prime with $p$. In fact, we have

\begin{prop}\label{newt:prop1}
Suppose that at least two of $a,b,c$ are divisible by $p$. Then $\clN_{a,b,c}$ cannot be $S$.
\end{prop}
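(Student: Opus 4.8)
The plan is to reduce to the case where two of the three exponents are literally divisible by $p$, say $p \mid b$ and $p \mid c$, and then exploit the Frobenius. Write $b = p b'$ and $c = p c'$. Since the characteristic is $p$ and the base field is $\bar{\bbF}_p$, we have $N_b = N_{pb'} = N_{b'}^p$ and $N_c = N_{pc'} = N_{c'}^p$. Hence $\clN_{a,b,c} = k(N_a, N_{b'}^p, N_{c'}^p)$. The key observation is that $k(N_{b'}^p, N_{c'}^p) \subseteq k(N_{b'}, N_{c'})^p \subseteq S^p$, and more importantly the Frobenius endomorphism of $S$ is not surjective: $[S : S^p] = p^2$ since $S = k(s_1, s_2)$ is a rational function field in two variables over a perfect field. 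So it will suffice to show that $\clN_{a,b,c}$ is contained in a proper subfield of $S$ coming from this failure of surjectivity of Frobenius, controlled so that adjoining the single element $N_a$ cannot recover all of $S$.

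First I would make precise the containment $\clN_{a,b,c} \subseteq k\bigl(N_a\bigr)\bigl(S^p\bigr)$, and observe that the compositum $S^p(N_a)$ has degree over $S^p$ at most the degree of $N_a$ over $S^p$, which is at most $p$ (indeed $N_a^p = N_{pa} \in S^p$, so $N_a$ satisfies $T^p - N_{pa}$ over $S^p$, and its degree over $S^p$ divides $p$). Therefore $[S^p(N_a) : S^p] \le p$, while $[S : S^p] = p^2$. Since $\clN_{a,b,c} \subseteq S^p(N_a)$, we get
\begin{equation*}
  [S : \clN_{a,b,c}] \ \ge\ \frac{[S:S^p]}{[S^p(N_a):S^p]} \ \ge\ \frac{p^2}{p} \ =\ p \ >\ 1,
\end{equation*}
so $\clN_{a,b,c} \ne S$, which is exactly the claim.

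Second, I would handle the reduction to the case $p \mid b$, $p \mid c$ exactly. If instead only the statement ``at least two of $a,b,c$ are divisible by $p$'' is given, then up to relabelling we are already in that situation — there is nothing to reduce, since ``divisible by $p$'' is literal. (If one wished to allow, more generally, that two of the exponents lie in the same coset so that their Newton polynomials become $p$-th powers after the substitution $x \mapsto x^{1/p}$, one would instead invoke the gcd reduction from the preliminary section; but as stated the argument above is direct.) The one point that needs a line of justification is that $S$ is genuinely a rational function field $k(s_1,s_2)$ in two algebraically independent variables over the perfect field $k = \bar{\bbF}_p$, so that $[S:S^p] = p^2$; this is standard, since $x,y$ are algebraically independent and hence so are the elementary symmetric functions $s_1 = x+y$, $s_2 = xy$.

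The main obstacle, such as it is, is bookkeeping the degree bound $[S^p(N_a):S^p] \le p$ correctly and making sure the chain of field extensions $S^p \subseteq \clN_{a,b,c} \subseteq S^p(N_a) \subseteq S$ is set up so that multiplicativity of degrees applies even before knowing all extensions are finite — but in fact each step here is finite (every element of $S$ is a root of a polynomial $T^{p^2} - (\text{its }p^2\text{-power})$ over $S^p$), so the tower argument is clean. I do not expect any serious difficulty.
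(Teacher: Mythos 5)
Your proof is correct and takes essentially the same route as the paper: both arguments trap $\clN_{a,b,c}$ inside an extension of the Frobenius subfield of degree at most $p$ (using the relation $X^p - N_{pa}$, resp. $X^p - N_{pc}$) and compare with $[S : S^{(p)}] = p^2$. The only difference is cosmetic: the paper bounds $[\clN_{a,b,c} : \clN_{a,b,pc}] \leq p$ with $\clN_{a,b,pc} \subseteq S^{(p)}$, whereas you use the upper containment $\clN_{a,b,c} \subseteq S^p(N_a)$; over the perfect base field these are the same comparison since $S^p = S^{(p)}$.
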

\begin{proof}
Assume $a,b$ to be divisible by $p$. Then $\clN_{a,b,pc}$ is contained in $S^{(p)}$, the
symmetric function field in $x^p,y^p$. But $\clN_{a,b,c}$ has degree at most $p$ over
$\clN_{a,b,pc}$, since it is generated by $N_c$, that satisfies $X^p-N_{pc}$. On the other end, the
degree $[S : S^{(p)}]$ is $p^2$, so $N_{a,b,c}$ cannot be equal to $S$.
\end{proof}

On the other hand, the following proposition shows that the case with only one among
$a,b,c$ divisible by $p$ can be reduced to the case where they are all prime with $p$.

\begin{prop}\label{newt:prop2}
Suppose $a,b,c$ all prime with $p$. Then for all $k \geq 1$ we have $\clN_{a,b,c} =
\clN_{a,b,p^kc}$.
\end{prop}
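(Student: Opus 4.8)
The plan is to reduce the statement to the separable/inseparable dichotomy for field extensions. The inclusion $\clN_{a,b,p^kc}\subseteq\clN_{a,b,c}$ is immediate, since in characteristic $p$ one has $N_{p^kc}=N_c^{p^k}$, so $\clN_{a,b,p^kc}=\clN_{a,b}(N_c^{p^k})\subseteq\clN_{a,b}(N_c)=\clN_{a,b,c}$; the content is the reverse inclusion, i.e. that $N_c$ already lies in $\clN_{a,b}(N_c^{p^k})$. As elsewhere in the paper I take $a,b,c$ to be distinct, so in particular $a\neq b$.

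First I would observe that, since $a,b$ are prime with $p$ and $a\neq b$, the Jacobian
\begin{equation*}
 \det\begin{pmatrix} ax^{a-1} & ay^{a-1} \\ bx^{b-1} & by^{b-1}\end{pmatrix}\ =\ ab\bigl(x^{a-1}y^{b-1}-x^{b-1}y^{a-1}\bigr)
\end{equation*}
is nonzero, exactly as in the proof of the previous proposition (coprimality is not needed here). Hence $N_a,N_b$ are algebraically independent over $k$, and $F/\clN_{a,b}$ is a finite separable extension; in particular every element of $S\subseteq F$, and so $N_c$ in particular, is separable algebraic over $\clN_{a,b}$.

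The key step is then to compare two descriptions of the extension generated by $N_c$. Set $L=\clN_{a,b}(N_c^{p^k})=\clN_{a,b,p^kc}$. On the one hand $N_c$ is separable over $L$, since it is already separable over the subfield $\clN_{a,b}$. On the other hand $N_c$ is a root of
\begin{equation*}
 X^{p^k}-N_c^{p^k}\ =\ (X-N_c)^{p^k}\ \in\ L[X],
\end{equation*}
so $N_c$ is purely inseparable over $L$. An element that is simultaneously separable and purely inseparable over a field belongs to that field, hence $N_c\in L$. Therefore $\clN_{a,b,c}=\clN_{a,b}(N_c)\subseteq L=\clN_{a,b,p^kc}$, which together with the easy inclusion gives the claimed equality.

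There is no serious obstacle in this argument; the only points requiring attention are that two of the three indices (here $a$ and $b$) must be distinct and prime with $p$ in order to produce a transcendence-degree-two subfield $\clN_{a,b}$ over which $N_c$ is separable — this is where the hypothesis on $a,b,c$ is genuinely used — and the elementary but crucial fact that an element that is at once separable and purely inseparable over a field is rational over it.
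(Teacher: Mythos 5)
Your proof is correct and follows essentially the same route as the paper: the extension generated by $N_c$ over $\clN_{a,b,p^kc}$ is purely inseparable via $X^{p^k}-N_{p^kc}=(X-N_c)^{p^k}$, while separability comes from the nonvanishing Jacobian argument for $\clN_{a,b}$ (where, as you note, coprimality of $a,b$ is not needed), and the two together force the extension to be trivial. The only cosmetic difference is that you re-derive the separability of $N_c$ over $\clN_{a,b}$ directly, whereas the paper simply cites the separability of $S/\clN_{a,b}$ established earlier.
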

\begin{proof}
The extension $\clN_{a,b,c} / \clN_{a,b,p^kc}$ is purely inseparable, being generated by $N_c$
that satisfies the purely inseparable equation
\begin{equation*}
 X^{p^k} - N_{p^kc} = 0.
\end{equation*}
But this extension is contained in the extension $S/\clN_{a,b,p^kc}$, that is separable (since as we
have seen in the proof of Prop. \ref{newt:prop1}, $S/\clN_{a,b}$ is separable). Thus being both
separable and purely inseparable the extension $\clN_{a,b,c} / \clN_{a,b,p^kc}$ must be trivial.
\end{proof}

\section{The main result}

Most of this section is dedicated to proving the following

\begin{prop} \label{newt:prop3}
Let $a>b>c$ be relatively prime positive integers, and suppose that $a,b,c,a-c,a-b,b-c$ are
prime to the characteristic $p$. Then we have that
\begin{equation*}
 \clN_{a,b,c} = S.
\end{equation*}
\end{prop}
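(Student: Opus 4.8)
The plan is to follow the same strategy that worked for two Newton polynomials (Prop. preceding), namely to understand the extension $F/\clN_{a,b,c}$ by exhibiting $x$ as a root of an explicit polynomial and controlling its irreducible factors, but now with the third relation $N_c = x^c + y^c$ available to cut the degree down from $[F:\clN_{a,b}]$ to $2$. Concretely, adjoining $x$ to $\clN_{a,b,c}$ again forces $y \in \clN_{a,b,c}(x)$ (since $a,b$ are coprime, $y^a = N_a - x^a$ and $y^b = N_b - x^b$ determine $y$), so $F = \clN_{a,b,c}(x)$ and it suffices to show $[\clN_{a,b,c}(x):\clN_{a,b,c}] = 2$, i.e. that $x$ satisfies a quadratic over $\clN_{a,b,c}$; equivalently, that $x+y$ and $xy$ lie in $\clN_{a,b,c}$, i.e. $\clN_{a,b,c} = S$. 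I would set this up as a determinantal/resultant computation: $x$ and $y$ are the two roots of $T^2 - e_1 T + e_2$, and the three equations $x^m + y^m = N_m$ for $m = a,b,c$ are three polynomial relations among $e_1, e_2$; eliminating shows $(e_1,e_2)$ is a solution of a system whose ``extra'' solutions are governed by the vanishing of a generalized Vandermonde determinant in the monomials $x^a, x^b, x^c$ (and $y^a,y^b,y^c$), which — as the abstract advertises — factors as a trivial Vandermonde piece times an irreducible piece.

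The heart of the argument is therefore the factorization statement for the generalized (Schur-type) Vandermonde determinant
\begin{equation*}
 V_{a,b,c}(x,y,z) = \det\begin{pmatrix} x^a & y^a & z^a \\ x^b & y^b & z^b \\ x^c & y^c & z^c \end{pmatrix},
\end{equation*}
which equals $(x-y)(y-z)(x-z)$ times the Schur polynomial $s_\lambda(x,y,z)$ for the partition $\lambda$ attached to $(a,b,c)$; the claim to be proved is that under the coprimality-with-$p$ hypotheses on $a,b,c$ and their pairwise differences, this Schur factor is \emph{irreducible} over $\bar\bbF_p$. Granting that irreducibility, I would argue as follows: the curve defined in the $(x,y)$-plane (or rather the locus of ``bad'' specializations) by setting the main factor to zero is irreducible, and one checks it is not preserved by the relevant symmetry or does not lie inside the locus forcing a proper subfield; hence the only component of the elimination variety that survives is the diagonal-type one, forcing $x$ to be quadratic over $\clN_{a,b,c}$ after all, and concluding $S = \clN_{a,b,c}$. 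The separability of $S/\clN_{a,b,c}$ is already guaranteed, since $S/\clN_{a,b}$ is separable by the earlier proposition and $\clN_{a,b,c} \supseteq \clN_{a,b}$.

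The main obstacle, clearly, is proving that the Schur factor $s_\lambda(x,y,z)$ is irreducible in $\bar\bbF_p[x,y,z]$ — this is where prime characteristic genuinely bites, because in characteristic $0$ one can invoke analytic/topological input (monodromy, Riemann existence) as Dvornicich–Zannier did, whereas here one needs an elementary algebraic substitute. I expect to prove it by a degree/Newton-polygon or specialization argument: reduce to two variables by setting $z = 1$ (or $z = 0$), analyze the bihomogeneous structure and the edge terms of $s_\lambda(x,y,1)$ as a polynomial in $x$ of known degree, and show any nontrivial factorization would force a factor of weight divisible by each of $a-b$, $a-c$, $b-c$ — impossible once these are pairwise coprime to $p$ and one controls the total weight, exactly the obstruction the hypotheses are designed to remove. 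The hypotheses on $a-b, a-c, b-c$ being prime to $p$ should enter precisely to prevent the Schur polynomial from degenerating (e.g. acquiring repeated factors or splitting off linear factors, the phenomenon flagged in the abstract as the source of the counterexample), and checking that these are the \emph{only} obstructions — i.e. that irreducibility really does hold in all remaining cases — is the delicate combinatorial core of the proof.
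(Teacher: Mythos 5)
Your outline reproduces the paper's advertised strategy (reduce everything to a generalized Vandermonde determinant whose Schur-type factor must be shown irreducible over $\bar{\bbF}_p$), but two load-bearing steps are missing, not just postponed. First, you never actually derive why the determinant vanishes on the relevant data. In the paper one assumes $[F:\clN_{a,b,c}]>2$, takes a second pair $z,w\in\sep{F}$ with $x^m+y^m=z^m+w^m$ for $m=a,b,c$, and then \emph{differentiates} with respect to the derivation $\partial/\partial z$ of $\bar{\bbF}(z,w)$, getting $x^{m-1}x'+y^{m-1}y'=z^{m-1}$; it is this linear system whose coefficient matrix is the generalized Vandermonde, and it is exactly here that $p\nmid abc$ is used. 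Your ``eliminate $e_1,e_2$'' sketch does not produce such a relation, and no direct elimination is exhibited that does. Second, and more seriously, irreducibility of the Schur factor does not by itself force $\clN_{a,b,c}=S$; your sentence about the bad locus ``not being preserved by the relevant symmetry'' is not an argument. The paper needs a genuinely separate idea at this point: since $T$ is \emph{symmetric} and $x,z$ are algebraically independent, one constructs an isomorphism $\epsilon$ with $\epsilon(x)=x$, $\epsilon(y)=z$, $\epsilon(z)=y$ (possible because $y$ is a root of $T(x,U,z)=T(x,z,U)$), applies it to $x^m+y^m-z^m=w^m$, adds the two systems to get $2x^m=w^m+u^m$ for $m=a,b,c$, and eliminates $u$ to obtain an algebraic relation for $w$ over $\bar{\bbF}(x)$, contradicting the transcendence of $w$ over $\bar{\bbF}(x)$ (this is also where characteristic $\neq 2$ enters). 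What is really used is not irreducibility per se but the fact that $y$ and $z$ are roots of one and the same symmetric factor; your proposal contains no substitute for this step.

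In addition, the irreducibility claim as you state it is false in general: with $A=a-c$, $B=b-c$, $d=(A,B)$, dividing the determinant only by the plain Vandermonde $(x-y)(y-z)(x-z)$ leaves, when $d>1$, the extra factor $V(x^d,y^d,z^d)/V(x,y,z)$, a product of binomials; the correct object is $T=R/V(X^d,Y^d,Z^d)$, the Schur polynomial in $X^d,Y^d,Z^d$. Moreover your proposed mechanism for irreducibility (a weight-divisibility argument as in the two-variable proposition, invoking the differences being ``pairwise coprime'') does not apply: the differences need not be pairwise coprime, and primality to $p$ carries no weight information. The paper's proof is of a different nature: an Eisenstein-type ``signature'' analysis at the primes $\generated{X-\zeta Y}$ attached to $B$-th and $(A-B)$-th roots of unity, combined with the symmetry of $T$ and a total-degree count, plus a separate nonsingularity/B\'ezout argument for the boundary cases $B=d$ or $A-B=d$ — and that is where the hypothesis $p\nmid(a-b)(b-c)(a-c)$ is actually consumed. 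So the proposal identifies the right central object, but all three essential steps (producing the determinantal relation, proving irreducibility of the correct quotient, and passing from irreducibility to $\clN_{a,b,c}=S$) remain unproved.
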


\begin{proof}

We will argue by contradiction, assuming the degree of $F/\clN_{a,b,c}$ to be $\gneqq 2$.
Let $\sep{F}$ be a separable algebraic closure of the rational functions $F$, and suppose
that there exists $z,w \in \sep{F}$ different from $x,y$ such that
\begin{equation} \label{newt:eq1}
 x^m+y^m \ =\  z^m+w^m,\qquad\text{for }m=a,b,c.
\end{equation}
Since $x,y$ are separable over $\clN_{a,b,c}$ we can restrict our attention to the
separable closure $\sep{F}$.

It is easy to see that there cannot be two of $x,y,z,w$ with constant ratio: in fact $x,y$ are
algebraically independent, and the same must be true for $z,w$, since  $k(z,w) \supseteq
\clN_{a,b,c}$, and $\clN_{a,b,c}$ has transcendence degree $2$. Now suppose that $z=\mu x$, with
$\mu \in k$. Then replacing $z$ with $\mu x$ and eliminating $w$ from \eqref{newt:eq1} we have
\begin{align*}
 \big((1-\mu^a)x^a + y^a\big)^b = \big((1-\mu^b)x^b + y^b\big)^a,\\
 \big((1-\mu^a)x^a + y^a\big)^c = \big((1-\mu^c)x^c + y^c\big)^a,\\
\end{align*}
that are relations between $x$ and $y$, that are algebraically independent. Consequently they must
be trivial, and considering the coefficients of $x^b,x^c,x^a$ we deduce that this can happen if and
only if $\mu^a = \mu^b = \mu^c = 1$, i.e. $\mu = 1$ being $(a,b,c) = 1$. But in this case $x=z$ and
$y=w$, and $x,y$ are not different from $z,w$.

To proceed let's extend to $\sep{F}$ the standard derivation $\partial/\partial z$ on the field
$\bar{\bbF}(z,w)$ (as we said before, $z,w$ are algebraically independent), that we will indicate
with a prime. Taking the derivative of \eqref{newt:eq1} we get the non trivial relations
\begin{equation} 
 x^{m-1}x'+y^{m-1}y' \ =\  z^{m-1},\qquad\text{for }m=a,b,c,
\end{equation}
since we required $a,b,c$ to be all prime with $p$. This system of equations can also be written as
\begin{equation*}
 \begin{pmatrix}
  x^{a-c} & y^{a-c} & z^{a-c} \\
  x^{b-c} & y^{b-c} & z^{b-c} \\
  1 & 1 & 1
 \end{pmatrix}
   \cdot
 \begin{pmatrix}
  x'\cdot x^{c-1} \\ y'\cdot y^{c-1} \\ z^{c-1}
 \end{pmatrix}
  \ =\ 
 \begin{pmatrix}
  0 \\ 0 \\ 0
 \end{pmatrix}.
\end{equation*}
This last equation shows that $x,y,z$ must be solutions of the determinantal polynomial $R(X,Y,Z)$
defined as
\begin{gather} \label{newt:eq2}
R(X,Y,Z) \ =\ 
\det
 \begin{pmatrix}
  X^{a-c} & Y^{a-c} & Z^{a-c}\\
  X^{b-c} & Y^{b-c} & Z^{b-c}\\
  1 & 1 & 1 \\
 \end{pmatrix} \ = \\
  Z^A(X^B-Y^B)-Z^B(X^A-Y^A)+X^BY^B(X^{A-B}-Y^{A-B}), \notag
\end{gather}
where we have put $A=a-c$, $B=b-c$, and that we will see as a polynomial in $Z$ with coefficients in
$\bar{\bbF}[X,Y]$.

Let $V(X,Y,Z) = (X-Y)(Z-X)(Z-Y)$, the Vandermonde determinant in $X,Y,Z$. If we let
$d=(A,B)$, then $R(X,Y,Z)$ is clearly divisible by $V(X^d,Y^d,Z^d)$, that is not zero on
$(x,y,z)$, since no two of $x,y,z$ have constant ratio.

Thus the quotient
\begin{equation*}
 T(X,Y,Z) \ =\  T_{A,B}(X,Y,Z) \ =\  \frac{R(X,Y,Z)}{V(X^d,Y^d,Z^d)},
\end{equation*}
that we are going to show to be irreducible, must vanish on $(x,y,z)$. Let's observe that
$T(X,Y,Z)$ is symmetric, it is also the Schur polynomial $s_\lambda(X^d,Y^d,Z^d)$ in $X^d,Y^d,Z^d$
associated with the partition $\lambda = (A/d-2,B/d-1,0)$ of the theory of symmetric
functions, following the notation of \cite{Macdonald}.

To prove the irreducibility of $T(X,Y,Z)$ in $\bar{\bbF}[X,Y,Z]$, let's consider the polynomial
\begin{equation*}
 I(X,Y,Z) \ =\  \frac{R(X,Y,Z)}{(X^d-Y^d)}, \\
\end{equation*}
that has an intermediate form between $R(X,Y,Z)$ and $T(X,Y,Z)$, and that we will use to extract 
information about $T(X,Y,Z)$. It can be written as
\begin{equation}\label{newt:eq3}
 Z^A \prod_{\substack{\zeta^B=1\\\zeta^d\neq1}}(X-\zeta Y)
    -Z^B\prod_{\substack{\xi^A=1\\\xi^d\neq1}}(X-\xi Y)
    +X^BY^B\prod_{\substack{\theta^{A-B}=1\\\theta^d\neq1}}(X-\theta Y).
\end{equation}

The $\zeta,\xi,\theta$ appearing in the \eqref{newt:eq3} are respectively the $A$-th,
$B$-th and $(A-B)$-th roots of the unity, with the $d$-th roots removed. They are all different,
since $p$ does not divide $A,B,A-B$, and the greatest common divisor of any two of $A,B,A-B$ is
precisely $d$.

\subsection{Irreducibility of \texorpdfstring{$T(X,Y,Z)$}{T(X,Y,Z)}.}

The strategy we are going to use to prove the irreducibility of $T(X,Y,Z)$ can be seen as a
variation of the Eisenstein's criterion, in a sense that will be specified below.

Let $f(U) = \sum_{i=0}^s f_iU^i \in R[U]$ be a polynomial in $U$ over a commutative unitary ring
$R$ with degree $s\geq r$ for some $r\geq 1$, such that $f_r \notin P$ for some prime ideal $P
\subset R$, $f_j\in P$ for $j<r$ and $f_0 \in P\setminus P^2$
\begin{equation*}
 f(U) \ =\  f_sU^s + \dots + \underset{\substack{\Dnotin\\P}}{f_r}U^r +
        \underset{\substack{\Din\\P}}{f_{r-1}}U^{r-1} + \dots +
        \underset{\substack{\Din\\P}}{f_1}U +
        \underset{\substack{\Din\\P\setminus P^2}}{f_0}.
\end{equation*}
If it can be factored as $f(U) = g(U)h(U)$, we can easily deduce from the factorization modulo $P$
that one of its factors, $g(U) = \sum g_iU^i$ say, must inherit this `signature', and satisfy $g_r
\notin P$, $g_j \in P$ for $j<r$, and $g_0 \in P\setminus P^2$, and in particular its degree is at
least $r$.
If $r$ is equal to $s$, the degree of $f$, this forces $h(U)$ to have degree zero, and we recover
precisely Eisenstein's irreducibility criterion. The polynomials that we are studying do
not satisfy the requirements for Eisenstein's criterion, but this will be compensated by the fact
that they are symmetric.

For convenience, we will call this property of $f(U)$ \emph{signature of length $r$ relative to
 the ideal $P$}, and since we can similarly have such a signature in the first $r$
coefficients of the highest degree terms rather than in the lowest degree
terms
\begin{equation*}
 f(U) \ =\  \underset{\substack{\Din\\P\setminus P^2}}{f_s}U^s +
        \underset{\substack{\Din\\P}}{f_{s-1}}U^{s-1} + \dots +
        \underset{\substack{\Din\\P}}{f_{s-r+1}}U^{s-r+1} +
        \underset{\substack{\Dnotin\\P}}{f_{s-r}}U^{s-r} + \dots +f_0,
\end{equation*}
we will respectively speak about \emph{upper signatures} and
\emph{lower signatures}.

Note that $T(X,Y,Z)$ is primitive in $Z$ (for instance because $I(X,Y,Z)$ is), so
we will just have to show that it cannot split into factors with degree $\geq 1$ in $Z$.

Arguing by contradiction, suppose that $T(X,Y,Z)$ can be factored in $k>1$ irreducible
factors with degree $\geq 1$ in $Z$, $\prod_{i=1}^k G_i(X,Y,Z)$, say. Observing the form of
\begin{equation*}
    I(X,Y,Z) = T(X,Y,Z)(Z^d-X^d)(Z^d-Y^d)
\end{equation*}
that we wrote in \eqref{newt:eq3}, we can see that the terms of degree $<B$ in $Z$ are
divisible by $(X-\theta Y)$ for all $\theta^{A-B}=1,\theta^d\neq1$, that the coefficient of the
constant term in $Z$ is divisible only once, while the coefficient of $Z^B$ is not divisible. Thus,
this polynomial has a lower signature of length $B$ relative to the ideal $P_\theta =
\generated{X-\theta Y}$ for all $\theta^{A-B}=1,\theta^d\neq1$, and similarly it has an upper
signature of length $A-B$ relative to the ideal $Q_\zeta =
\generated{X-\zeta Y}$ for all $\zeta^{B}=1,\zeta^d\neq1$.

This polynomial has both an upper signature and a lower signature, unless either
$d=B$, or $d=A-B$, and these cases will be considered separately.

\subsection{Case 1 (with \texorpdfstring{$B\neq d$}{B =/= d} and \texorpdfstring{$A-B\neq
d$}{A-B =/= d}).}
The irreducible factors of $I(X,Y,Z)$ that inherit an upper (resp. lower) signature must have degree
in $Z$ at least $B$ (resp. $A-B$), and they must be factors of $T(X,Y,Z)$. Since the degree in $Z$
of $T(X,Y,Z)$ is precisely $A-2d$, there must be one `big' factor, $G_1(X,Y,Z)$ say, that inherits 
both an upper and a lower signature, or $T(X,Y,Z)$ would have degree $\geq A$ in $Z$. For the same
reason, this big factor $G_1(X,Y,Z)$ must inherit \emph{all} signatures of $I(X,Y,Z)$
relative to the $P_\theta$ and $Q_\zeta$.

Thus, we have that any product of some of the remaining factors
\begin{equation*}
\prod_{i\in I} G_i(X,Y,Z),\qquad\text{with }I \subseteq \{2,3,\dots,k\},\ I\neq \emptyset,
\end{equation*}
must be monic in $Z$, have constant term of the form $X^rY^s$ (for some $r,s \geq 0$), and in
particular it cannot be symmetric. It follows that $T(X,Y,Z)$ cannot be factored as a non trivial
product of symmetric polynomials, i.e. the action of the symmetric group $S_3$ as permutations of
$X,Y,Z$ on the irreducible factors is transitive.

Such action does not preserve the degree in $Z$, but it preserves the total degree, and the
$G_i(X,Y,Z)$ must have the same total degree. As we have seen, $G_1(X,Y,Z)$ has degree in $Z$ at
least $A-B$, and its leading coefficient is the product of precisely $B-d$ factors in $X,Y$ of the
form $(X-\zeta Y)$. Thus, its total degree is at least $A-d$.

On the other side the total degree of $T(X,Y,Z)$ is precisely $A+B-3d$. Were the number of
factors $\geq 2$, then the total degree should be at least
\begin{equation*}
 2(A-d) \gneqq A+B-3d,
\end{equation*}
since $A>B$ and $d>0$. This contradiction proves that $G_1(X,Y,Z)$ must be the only factor, and that
$T(X,Y,Z)$ is irreducible.

\subsection{Case 2 (with \texorpdfstring{$B = d$}{B=d} or \texorpdfstring{$A-B = d$}{A-B=d}).} Let's
show that the case with $A-B = d$ can be
reduced to the case with $B=d$. Since
\begin{equation*}
\frac{
 \det \begin{pmatrix}
  X^A & Y^A & Z^A\\
  X^{d} & Y^{d} & Z^{d}\\
  1 & 1 & 1 \\
 \end{pmatrix}
}{
 \det \begin{pmatrix}
  X^{2d} & Y^{2d} & Z^{2d}\\
  X^d & Y^d & Z^d\\
  1 & 1 & 1 \\
 \end{pmatrix}
}
\ =\ 
\frac{ (XYZ)^A \cdot
 \det \begin{pmatrix}
  1 & 1 & 1\\
  X^{-A+d} & Y^{-A+d} & Z^{-A+d}\\
  X^{-A} & Y^{-A} & Z^{-A} \\
 \end{pmatrix}
}{ (XYZ)^{2d} \cdot
 \det \begin{pmatrix}
  1 & 1 & 1 \\
  X^{-d} & Y^{-d} & Z^{-d}\\
  X^{-2d} & Y^{-2d} & Z^{-2d}\\
 \end{pmatrix}
},
\end{equation*}
we have that
\begin{equation*}
 T_{A,d}(X,Y,Z) \ =\  (XYZ)^{A-2d} \cdot T_{A,A-d}(X^{-1},Y^{-1},Z^{-1}).
\end{equation*}
Consequently from a factorization of $T_{A,A-d}(X^{-1},Y^{-1},Z^{-1}) \in
\bar{\bbF}[X^{-1},Y^{-1},Z^{-1}]$ we can deduce a factorization of $T_{A,d}(X,Y,Z)$,
distributing factors of $(XYZ)^{A-2d}$ on the factors of $T_{A,A-d}(X^{-1},Y^{-1},Z^{-1})$ to make
all exponents positive. The only case where a non trivial factorization can become a trivial
factorization is when one of the factors of $T_{A,A-d}(X^{-1},Y^{-1},Z^{-1})$ is a non trivial
monomial $X^{-r}Y^{-s}Z^{-t}$ for $r,s,t \geq 0$, but this cannot happen in view of the definition
of $T_{A,A-d}(X,Y,Z)$.

To prove the irreducibility of $T_{A,d}(X,Y,Z)$, we will show that the variety defined in
$\bbP^2(\bar{\bbF})$ is nonsingular. The irreducibility follows immediately, since two irreducible
factors would define two projective varieties with non empty intersection (by the theorem of
B\'ezout, see \cite{Hartshorne1977}), and on a point of this intersection all derivatives of the
product would be zero.

Let's consider first the case with $d=1$, and put for convenience $A=k$ for some integer
$k \geq 2$, and $B=1$. If $k=2$ then $T_{k,1}(X,Y,Z) = 1$, so let's suppose $k > 2$. It's easy to
see with a direct computation, or considering the Jacobi-Trudi identity (see \cite{Macdonald}),
that $T_{k,1}(X,Y,Z)$ is the $(k-2)$-th complete symmetric function, i.e. the sum of all monomials
of degree $k-2$, denoted as $h_{k-2}(X,Y,Z)$ in the notation of \cite{Macdonald}.

We have that
\begin{equation*}
 \left( \pdiff{X}+\pdiff{Y}+\pdiff{Z} \right)T_{k,1}(X,Y,Z) \ =\  k \cdot T_{k-1,1}(X,Y,Z),
\end{equation*}
$k$ times the sum of all monomials of degree $k-3$, since the contribution to the monomial
$X^rY^sZ^t$, for $r,s,t \geq 0, r+s+t = k-3$, is given by
\begin{equation*}
\pdiff{X}X^{r+1}Y^sZ^t + \pdiff{Y}X^rY^{s+1}Z^t + \pdiff{Z}X^rY^sZ^{t+1} \ =\  k\cdot X^rY^sZ^t.
\end{equation*}

Suppose that there exists a point with homogeneous coordinates $(x,y,z)$ satisfying the
system of equations
\begin{equation*}
\left\{\begin{array}{l}
  T_{k,1}(X,Y,Z) \ =\ 0, \\
   \pdiff{X}T_{k,1}(X,Y,Z) \ =\ 0, \\
   \pdiff{Y}T_{k,1}(X,Y,Z) \ =\ 0, \\
   \pdiff{Z}T_{k,1}(X,Y,Z) \ =\ 0.
\end{array}\right.
\end{equation*}

Being $k$ prime to the characteristic $p$, the point $(x,y,z)$ must also be a
solution of
\begin{gather*}
 T_{k,1}(X,Y,Z) - \frac{X}{k} \cdot \left( \pdiff{X}+\pdiff{Y}+\pdiff{Z} \right)T_{k,1}(X,Y,Z)\\
  \ =\  \sum_{i=0}^{k-2}Y^iZ^{k-2-i} \ =\  \prod_{\substack{\phi^{k-1} = 1\\\phi \neq 1}} (Y-\phi
Z).
\end{gather*}

Since we also supposed $k-1$ prime to the characteristic $p$, the coordinates of this point must
satisfy $y = \phi z$ for some $\phi^{k-1}=1, \phi\neq 1$. Repeating the computation
with the other variables we have that any two of $x,y,z$ differ by a $(k-1)$-th root of
the unity different from $1$.

Consequently, this point is of the form $(\phi t,\psi t,t) \in \bbP^2{\bar{\bbF}}$, with $\phi^{k-1}
= \psi^{k-1} = 1$ and $\phi,\psi,1$ all different, and $t\neq 0$. But we have that
\begin{gather*}
 \pdiff{Z} T_{k,1}(X,Y,Z) \ =\  \pdiff{Z}\frac{R(X,Y,Z)}{V(X,Y,Z)} \ = \\
 \frac{kZ^{k-1}(X-Y)-(X^k-Y^k)}{V(X,Y,Z)}-R(X,Y,Z)\frac{\pdiff{Z}V(X,Y,Z)}{V(X,Y,Z)^2},
\end{gather*}
where as usual we called $V(X,Y,Z)$ the Vandermonde determinant, and $R(X,Y,Z)$ the determinantal
polynomial defined in \eqref{newt:eq2} for $A=k,B=1$. Evaluating at $(\phi t, \psi t, t)$, and
taking into account that $R(\phi t,\psi t,t)=0$, we deduce that
\begin{gather*}
 \pdiff{Z} T_{k,1}(X,Y,Z) \bigg|_{(\phi t, \psi t, t)}
 \ =\  (k-1)\frac{t^{k-3}}{(1-\phi)(1-\psi)} \ \neq\  0.
\end{gather*}

Let's now take care of the case with $d \geq 1$, and write $A=kd, B=d$. To prove the
irreducibility of $T_{kd,d}(X,Y,Z) \ =\  T_{k,1}(X^d,Y^d,Z^d)$, we will show
that it defines a nonsingular variety as well. So, let's consider the system of equations

\begin{equation*}
\left\{\begin{array}{l}
  T_{k,1}(X^d,Y^d,Z^d) \ =\ 0, \\
   dX^{d-1}\cdot \pdiff{X}T_{k,1}(X^d,Y^d,Z^d) \ =\ 0, \\
   dY^{d-1}\cdot \pdiff{Y}T_{k,1}(X^d,Y^d,Z^d) \ =\ 0, \\
   dZ^{d-1}\cdot \pdiff{Z}T_{k,1}(X^d,Y^d,Z^d) \ =\ 0.
\end{array}\right.
\end{equation*}
Clearly any point $(x,y,z)$ such that $x,y,z$ are all $\neq 0$ cannot satisfy this system, because
this would imply that $(x^d,y^d,z^d)$ would be singular point for $T(X,Y,Z)$, and this cannot
happen as we have just seen.

So let's suppose that the above equations are satisfied in a point $(x,y,z)$ with $y=0$, say. Such a
point must be a solution of
\begin{equation*}
 T_{k,1}(X^d,0,Z^d)\  =\  \prod_{\substack{\phi^{k-1} = 1\\\phi \neq 1}} (X^d-\phi Z^d),
\end{equation*}
implying that $x^d$ and $z^d$ differ by a factor that is a $(k-1)$-th root of the unity different
from $1$. Consequently $(x^d,y^d,z^d)$ must be of the form $(\phi t,0,t)$ for
$\phi^{k-1}=1,\phi\neq1$ and $t\neq 0$, and all we have to show is that
\begin{equation*}
 \pdiff{Z}T_{k,1}(X,Y,Z)\bigg|_{(\phi t,0,t)}
  \ =\ (k-1)\frac{t^{k-3}}{1-\phi} \ \neq\  0.
\end{equation*}

\subsection{Conclusion.}
We just proved that $T(X,Y,Z)$ is irreducible as a polynomial in $Z$ with coefficients in
$\bar{\bbF}[X,Y,Z]$, and consequently it will also be irreducible in $\bar{\bbF}(X,Y)[Z]$ thanks
to Gauss' lemma, being the ring $\bar{\bbF}[X,Y]$ factorial.

Recall that we are supposing the following equations
\begin{equation} \label{newt:eq4}
 x^m+y^m-z^m \ =\  w^m,\qquad\text{ for }m=a,b,c
\end{equation}
to be satisfied for some $w,z$ different from $x,y$, and that $T(X,Y,Z)$ is a relation satisfied by
$x,y,z$.

Note that $z$ must be transcendental over $\bar{\bbF}(x)$. In fact, suppose that this is not the
case: $y$ is a root of the polynomial $R(x,U,z)$, considered as a polynomial in $U$ over
$\bar{\bbF}(x,z)$, and consequently of $T(x,U,z)$ since no two of $x,y,z$ have constant ratio.
Furthermore, $T(x,U,z)$ cannot vanish identically, since its constant term is a homogeneous
polynomial in $x,z$, i.e. of the form $\prod (x-\theta_i z)$, and $x,z$ do not have constant ratio.

This implies the existence of a non trivial algebraic relation of $y$ over $\bar{\bbF}(x,z)$, and
consequently that $y$ is algebraic over $\bar{\bbF}(x)$, but this is impossibile since we assumed
$x,y$ to be algebraically independent. Let's also note for future reference that $w$ must be
transcendental over $\bar{\bbF}(x)$ as well.

The algebraic independence of $x,z$ allows us to define an isomorphism $\epsilon : \bar{\bbF}(x,y)
\rightarrow \bar{\bbF}(x,z)$ that fixes the constants and such that
\begin{equation*}
 x\mapsto x,\qquad y\mapsto z.
\end{equation*}

Since $z$ is a root of the polynomial $T(x,y,U)$ in $U$, we can extend $\epsilon$ to
$\bar{\bbF}(x,y,z)$ defining the image of $z$ to be any root of
\begin{equation*}
\epsilon T(x,y,U) \ =\  T(\epsilon x,\epsilon y,U) \ =\  T(x,z,U) \ =\  T(x,U,z),
\end{equation*}
being $T(X,Y,Z)$ a symmetric polynomial. In particular we can put $\epsilon(z) = y$.

Let's now extend $\epsilon$ to the algebraic closure of $\bar{\bbF}(x,y,z)$, and let $u =
\epsilon(w)$ (actually we have $w \in \bar{\bbF}(x,y,z)$, but we will not have to use this fact).
Applying $\epsilon$ to the \eqref{newt:eq4} we get
\begin{equation} \label{newt:eq5}
 x^m+z^m-y^m \ =\  u^m,\qquad\text{ for }m=a,b,c.
\end{equation}
Adding together the \eqref{newt:eq4} and \eqref{newt:eq5} we get
\begin{equation} \label{newt:eq6}
 2x^m \ =\  w^m + u^m,\qquad\text{ for }m=a,b,c
\end{equation}
(recall that the hypotheses rule out the case of characteristic $2$). Eliminating
$u$ from the \eqref{newt:eq6} for $m=a,b$ we have
\begin{equation} \label{newt:eq7}
 (2x^a-w^a)^b - (2x^b-w^b)^a \ =\  0.
\end{equation}
This is a non trivial algebraic relation of $w$ over $\bar{\bbF}(x)$, that had been
proved to be transcendental over $\bar{\bbF}(x)$. This contradiction concludes the proof.
\end{proof}

A comment on the hypotheses we required at the beginning of the theorem is needed. Let's restrict
to the case of $a,b,c$ coprime and all prime to $p$, as we are allowed to do thanks to Proposition
\ref{newt:prop2}. Computer experiments show that in many cases where $p$ divides
the differences $a-c,a-b,b-c$ the Newton polynomials $N_a,N_b,N_c$ still generate the
full symmetric field.

A careful analysis of the proof shows that in the \emph{Case 1} of the proof of the irreducibility
of $T(X,Y,Z)$ we did not actually use the fact that $A=a-c$ is prime to $p$ (in the \emph{Case
2} this hypothesis is important and necessary, as we will show with some examples below). We have
omitted this small weakening of the hypothesis to avoid complicating too much the statement.

Furthermore, the conclusive step works flawlessly without $T(X,Y,Z)$ being irreducible, provided
that we know its factors to be \emph{all symmetric polynomials}. It is possible to show examples
where precisely this happens (such as $T_{7,3}(X,Y,Z)$ in characteristic $2$), but it seems
difficult to show this for some class of polynomials.

On the other side, if we do not require $a-c,a-b,b-c$ to be prime to $p$ there are cases where
$N_a,N_b,N_c$ do not generate the full symmetric field. A family of cases where this happens is
related to the factorization of $T_{p^r,1}(X,Y,Z)$, for $r\geq 1$. In fact, we have
\begin{equation*}
 T_{p^r,1}(X,Y,Z) \ =\ 
    \prod_{\substack{\alpha \in \bbF_{p^r}\\\alpha \neq 0,1}}(Z-\alpha X +(\alpha-1)Y),
\end{equation*}
as we will show below together with a few other factorizations of the polynomials 
$T_{A,B}(X,Y,Z)$, for $A,B,A-B$ not all prime to $p$.

\section{A family of counterexamples}
Let $p$ be a prime $\neq 2$, and for each $\eta \in \bbF_p$ let's consider the polynomial
\begin{equation} \label{count:eq1}
 P_\eta(X) \ =\  X^2 -2\eta X +\eta.
\end{equation}
Note that a root of $P_\eta(X)$ cannot be root of $P_\kappa(X)$ for $\eta\neq
\kappa$, because the equation
\begin{equation*}
  X^2 -2\eta X +\eta \ =\ 0,
\end{equation*}
considered as an equation in $\eta$ for a given $X$ determines univocally $\eta$, unless
$X = 1/2$, which is never a solution because $P_\eta(1/2) = 1/4 \neq 0$ for each $\eta \in \bbF_p$.

Furthermore, each $P_\eta(X)$ has distinct roots unless its discriminant $4(\eta^2-\eta)$
vanishes, and this can only happen for $\eta = 0,1$.

Thus, as $\eta$ varies in $\bbF_p$ the polynomials $P_\eta(X)$ have $2p-2$ different roots
overall, and note that $2p-2 > p$ for $p \geq 3$. Consequently, since in $\bbF_p$ there are only
$p$ elements, one of these roots will belong to $\bbF_{p^2}\setminus \bbF_p$, and this implies that
at least one of the $P_\eta(X)$ is irreducible in $\bbF_p[X]$ for some $\eta \in \bbF_p$.

Let $P_\eta(X)$ be irreducible, and $\alpha,\beta \in \bbF_{p^2}\setminus
\bbF_p$ be its roots. These roots are interchanged by the Frobenius automorphism $\scF$
\begin{equation*}
 \scF : \bar\bbF \rightarrow \bar\bbF,\qquad \tau \mapsto \tau^p.
\end{equation*}
In particular, they are interchanged applying $\scF$ any odd number of times, i.e.
\begin{equation*}
 \alpha^{p^{2k+1}} \ =\  \beta,\qquad \beta^{p^{2k+1}} \ =\  \alpha
\end{equation*}
for any integer $k$.

Note also that by construction we have
\begin{equation*}
 2\alpha \beta \ =\  2\eta \ =\  \alpha+\beta.
\end{equation*}

If we now define
\begin{equation}\label{count:eq2}
  z \ =\  \alpha x + (1-\alpha)y,\qquad w \ =\  (1-\alpha)x + \alpha y,
\end{equation}
we have that for any integer $k$
\begin{align*}
  z^{p^{2k+1}+1}+&w^{p^{2k+1}+1} \ =\  z^{p^{2k+1}}\cdot z +w^{p^{2k+1}}\cdot w \\
 =\ & (\alpha x + (1-\alpha)y)^{p^{2k+1}}(\alpha x + (1-\alpha)y) \\
  & + ((1-\alpha)x + \alpha y)^{p^{2k+1}}((1-\alpha)x + \alpha y)\\
 =\ & (\beta x^{p^{2k+1}} + (1-\beta)y^{p^{2k+1}})(\alpha x + (1-\alpha)y) \\
  & + ((1-\beta)x^{p^{2k+1}} + \beta y^{p^{2k+1}})((1-\alpha)x + \alpha y) \\
 =\ & (2\alpha\beta - \alpha-\beta+1)(x^{p^{2k+1}+1}+y^{p^{2k+1}+1}) \\
  & + (\beta+\alpha-2\beta \alpha)(x^{p^{2k+1}}y+xy^{p^{2k+1}}) \\
 =\ & x^{p^{2k+1}+1}+y^{p^{2k+1}+1}.
\end{align*}

Thus, if we take $a,b,c$ equal to  $p^{2k+1}+1,p^{2\ell+1}+1,1$ for $k>\ell\geq0$, we have found an
`alternative' pair $z,w$ in \eqref{count:eq2} that satisfies the equations
\eqref{newt:eq1}, and consequently $N_a,N_b,N_c$ cannot generate the full symmetric field.

We can also calculate the degree of the symmetric field over the field generated by
$N_{p^r+1},N_{p^s+1},N_1$ for $r>s\geq 0$, because all we have to do is count the number of
$z$ that together with some $w$ satisfy the \eqref{newt:eq1}. In particular those different from
$x,y$ can be found among the roots of $T_{p^{r-s},1}(x,y,U)^{p^s}$ considered as a polynomial 
in $U$, and given the factorization of $T_{p^{r-s},1}(x,y,U)$ in linear factors we know that they 
must be of the form $z = \alpha x + (1-\alpha)y$ for $\alpha \in \bbF_{p^{r-s}}$, $\alpha \neq
1,0$.

Furthermore, $w$ is uniquely determined as $w = (1-\alpha) x + \alpha y$, and if we put $\beta =
\alpha^{p^r}$, then $\alpha,\beta$ must satisfy the condition $2\alpha\beta = \alpha+\beta$, and
are the roots of a polynomial $P_\eta(X)$ for some $\eta \in \bar{\bbF}_p$. If $\alpha=\beta$, then 
$\alpha = 0,1$ and we get $z=y$ or $z=x$ respectively, so let's consider the case
$\alpha \neq \beta$. Now, if we put $\gamma = \alpha^{p^s}$, the condition $2\alpha\gamma
= \alpha+\gamma$ must be satisfied as well, and consequently $\beta =\gamma = \alpha^{p^s}$.

We have that $\scF^s$ (i.e. $\scF$ applied $s$ times) maps $\alpha$ to $\beta$, and applied to the
coefficients of $P_\eta(X)$ we get another polynomial of the same form, $P_\kappa(X)$ say. Since
$P_\eta(\beta) = P_\kappa(\beta) = 0$, this implies that $\eta = \kappa$, and it follows that
symmetrically $\scF^s$ maps $\beta$ to $\alpha$, and leaves $\eta$ fixed. Since the same is true for
$\scF^r$, we have that $\eta$ is fixed by $\scF^m$ for $m=(r,s)$, i.e. that $\eta \in \bbF_{p^m}$,
while $\alpha$ has degree precisely $2$ over $\bbF_{p^m}$, in other words that $\alpha \in
\bbF_{p^{2m}}\setminus \bbF_{p^m}$.

We can now distinguish two cases: when $2m \nmid (r-s)$, we have that $\bbF_{p^{2m}} \cap
\bbF_{p^{r-s}} = \bbF_{p^m}$, and the only $\alpha$ allowed are $0,1$, and
$N_{p^r+1},N_{p^s+1},N_1$ consequently generate the full symmetric field.

On the other hand, when $2m \mid (r-s)$ we have that $\bbF_{p^{2m}} \subset \bbF_{p^{r-s}}$,
and all we have to do is counting the number of solutions of
\begin{equation*}
 X^2 -2\eta X + \eta = 0,\qquad X \in \bbF_{p^{2m}}\setminus \bbF_{p^m}
\end{equation*}
while $\eta$ varies in $\bbF_{p^m}$. Repeating the same computation we did at the beginning of this
section, we deduce that the total number of solutions in $\bbF_{p^{2m}}$ is precisely $2p^m-2$,
and that each $X\in \bbF_{p^m}$ is a solution for some $\eta$ except for $X=1/2$, and the number of
these bad solutions in $\bbF_{p^m}$ is precisely $p^m-1$. Adding the trivial solutions $\alpha =
0,1$ and dividing by two we obtain the degree.

In conclusion, for any $r>s\geq 1$ and $m=(r,s)$, the degree of the symmetric field $S$ over the
field generated by $N_{p^r+1},N_{p^s+1},N_1$ is
\begin{equation*}
 [S : \clN_{p^r+1,p^s+1,1}] \ =\ \left\{
    \begin{array}{cl}
                      1 & \text{if } 2m \nmid (r-s) \\
                      \frac{p^m+1}{2} & \text{if } 2m \mid (r-s)
    \end{array}
\right.
\end{equation*}

We can easily see that in characteristic $2$ we can construct an analogous family of
counterexamples considering the pair
\begin{equation*}
  z \ =\  \alpha x + (1-\alpha)y,\qquad w \ =\  (1-\alpha) x + \alpha y,
\end{equation*}
where $\alpha \in \bbF_{2^2}\setminus \bbF_2$ is a third root of the unity, but in this case
the indices $a,b,c$ must be chosen of the form $2^{2l}+1,2^{2k}+1,1$, where \emph{even} powers of
$2$ appear.

In fact, in characteristic $2$ the condition $2\alpha\beta = \alpha+\beta$ is equivalent to
$\alpha = \beta$ (and this in characteristic $\neq 2$ can never happen, unless $\alpha =
0,1$).

To calculate the degree of $S$ over $N_{2^r+1},N_{2^s+1},N_1$ let's observe that all we have to do
is count the number of elements $\alpha \in \bbF_{2^{r-s}}$ that are left fixed by $\scF^s$
and $\scF^r$, and they are precisely the elements of $\bbF_{2^m}$, for $m = (r,s)$. Dividing by two
we get the degree of the extension
\begin{equation*}
 [S : \clN_{2^r+1,2^s+1,1}] \ =\  2^{m-1}.
\end{equation*}

\subsection{Factorization of certain families of \texorpdfstring{$T(X,Y,Z)$}{T(X,Y,Z)}.}

We will now show that
\begin{equation} \label{fact:eq1}
 T_{p^r,1}(X,Y,Z) \ =\ 
    \prod_{\substack{\alpha \in \bbF_{p^r}\\\alpha \neq 0,1}}(Z-\alpha X +(\alpha-1)Y).
\end{equation}
To check the equality, calling as usual $V(X,Y,Z)$ the Vandermonde
determinant, it is enough to observe that we have
\begin{equation*}
 T_{p^r,1}(X,Y,Z)\cdot V(X,Y,Z) \ =\
 \det\begin{pmatrix}
   X^{p^r} & Y^{p^r} & Z^{p^r} \\
   X & Y & Z \\
   1 & 1 & 1
 \end{pmatrix},
\end{equation*}
and the determinant vanishes if we put $Z = \alpha X - (\alpha-1)Y$ for each $\alpha \in
\bbF_{p^r}$. We have to exclude the factors $(Z-\alpha X +(\alpha-1)Y)$ for $\alpha =
0,1$, because they are precisely the factors dividing $V(X,Y,Z)$, but the remaining $p^r-2$ factors
are factors of $T_{p^r,1}(X,Y,Z)$, which has degree precisely $p^r-2$ in $Z$. To conclude we have
just to verify that the constant factor by which they may differ is $1$, but this is obvious
considering that the two espressions appearing in the \eqref{fact:eq1} are both monic in $Z$.

Another factorization of the same flavor is the following:
\begin{equation} \label{fact:eq2}
 T_{p^{2r}-1,p^r-1}(X,Y,Z) \ =\ 
    \prod_{\substack{\alpha,\beta \in \bbF_{p^r}\\\alpha,\beta \neq 0}}(Z-\alpha X -\beta Y).
\end{equation}
In fact, when we replace $Z = \alpha X + \beta Y$, we have that for each $\alpha,\beta \in
\bbF_{p^r}$ this substitution makes
\begin{gather*}
 T_{p^{2r}-1,p^r-1}(X,Y,Z)\cdot V(X^{{p^r}-1},Y^{{p^r}-1},Z^{{p^r}-1})\cdot XYZ \ =\ \\
 \ =\  \det\begin{pmatrix}
   X^{p^{2r}} & Y^{p^{2r}} & Z^{p^{2r}} \\
   X^{p^r} & Y^{p^r} & Z^{p^r} \\
   X & Y & Z
 \end{pmatrix}
\end{gather*}
vanish, and discarding as before the factors $(Z-\alpha X +\beta Y)$ where $\alpha = 0$ or $\beta =
0$, we are left with $p^{2r}-2p^r+1$ linear factors, and this is precisely the degree
of $T_{p^{2r}-1,p^r-1}(X,Y,Z)$. To prove the equality, we must again observe that both factors are
monic in $Z$.

It can also be interesting to observe that all these substitutions $Z = \alpha X + \beta Y$,
for $\alpha,\beta \in \bbF_{p^r}$, also make the determinant
\begin{equation*}
 \det\begin{pmatrix}
   X^{p^s} & Y^{p^s} & Z^{p^s} \\
   X^{p^t} & Y^{p^t} & Z^{p^t} \\
   X & Y & Z
 \end{pmatrix}
\end{equation*}
vanish for each $s > t \geq 1$ that are both divisible by $r$.

This determinant is also a multiple of
$T_{p^s-1,p^t-1}(X,Y,Z)$, and differs from it for a factor
$V(X^{{p^m}-1},Y^{{p^m}-1},Z^{{p^m}-1})\cdot XYZ$, that can vanish after the substitution only if
we put either $\alpha$ or $\beta$ equal to $0$.

Consequently for all $s > t \geq 1$, both divisible by $r$, we have that
\begin{equation*}
  T_{p^{2r}-1,p^r-1}(X,Y,Z) \ \Big|\  T_{p^s-1,p^t-1}(X,Y,Z).
\end{equation*}
Since actually both these polynomials are polynomials in $X^{p^r-1}$, $Y^{p^r-1}$, $Z^{p^r-1}$,
we must also have the following divisibility rule:
\begin{equation*}
 T_{p^r+1,1}(X,Y,Z) \ \Big|\  T_{\frac{p^s-1}{p^r-1},\frac{p^t-1}{p^r-1}}(X,Y,Z).
\end{equation*}

\section{Irreducibility of \texorpdfstring{$T_{p^r+1,1}(X,Y,Z)$}{T(X,Y,Z)}} We will now show
that $T_{p^r+1,1}(X,Y,Z)$,
for $r\geq 1$, that we just showed to be a factor of a class of $T(X,Y,Z)$, is irreducible. Note
that such polynomials do not belong to the family of polynomials that we have showed to be
irreducible in \emph{Case 2} of Prop. \ref{newt:prop3} proving that the projective
variety that they define is nonsingular, and in fact the point with homogeneous coordinates
$(t,t,t)$ for $t\neq 0$ is a singular point for $T_{p^r+1,1}(X,Y,Z)$.

We will use the following strategy: let
\begin{equation*}
 f(Z) \ =\  f_k Z^k + \dots + f_1 Z + f_0 \ \in k[X_1,\dots,X_n,Z]
\end{equation*}
be an homogeneous polynomial in $X_1,\dots,X_n,Z$, considered as a polynomial in $Z$ with
coefficients in $k[X_1,\dots,X_n]$. Let's suppose that there exists a $P \in
k[X_1,\dots,X_n]$ such that $f_0$ is a power of $P$, and that $P \nmid f_1$. Then $f(Z)$ is
irreducible in $k[X_1,\dots,X_n,Z]$.

Suppose in fact that $f(Z) = a(Z)b(Z)$, with $a(Z) = \sum_i{a_iZ^i}$, $b(Z)
= \sum_i{b_iZ^i}$, both of degree $\geq 1$ in $Z$. The we have that
\begin{equation*}
 f_0 \ =\  a_0 b_0,\qquad f_1 \ =\ a_1b_0 + a_0b_1.
\end{equation*}

Since the factorization is not trivial, and the factors are homogeneus, $a_0$ and $b_0$ have to be
non trivial powers of $P$, but this is absurd since it would imply that $P \mid f_1$. Consequently
$f(Z)$ cannot factored in factors with degree $\geq 1$ in $Z$, and to deduce the irreducibility in 
$k[X_1,\dots,X_n,Z]$ it suffices to show that it is primitive as a polynomial in $Z$, but this is
obvious considering that $P \nmid f_1$.

To apply this strategy to $T_{p^r+1,1}(X,Y,Z)$, consider that it is the sum of all monomials of
degree $p^r-1$, and if viewed as polynomials in $Z$ its constant term is
\begin{equation*}
 \sum_{i=0}^{p^r-1} X^iY^{p^r-1-i} \ =\ \frac{X^{p^r}-Y^{p^r}}{X-Y} \ =\ (X-Y)^{p^r-1}.
\end{equation*}
On the other hand, the coefficient of the term of degree one in $Z$ is
\begin{equation*}
 \sum_{i=0}^{p^r-2} X^iY^{p^r-2-i} \ =\ \frac{X^{p^r-1}-Y^{p^r-1}}{X-Y}
  \ =\ \prod_{\substack{\zeta^{p^r-1}=1\\\zeta \neq 1}} (X-\zeta Y).
\end{equation*}
Thus, we have verified that the constant term is a power of $X-Y$, and this is a factor that does
not appear in the coefficient of $Z$.

Consequently the irreducibility follows, applying the above
strategy. Knowing the irreducibility of this family of polynomials provides us one more case
where Proposition \ref{newt:prop3} is true, in particular when $a,b,c$ are coprime integers,
prime to the characteristic $p$, and such that $a-b = p^r$ for some $r \geq 1$ and $b-c=1$.

\bibliography{bibliografia}{}
\bibliographystyle{amsalpha}

\end{document}